\documentclass[12pt]{amsart}

\usepackage[left=2.5cm,right=2.5cm,top=2.8cm,bottom=2.8cm]{geometry}
\usepackage{amsmath,amssymb,amsfonts,amsthm}
\usepackage{mathtools}
\usepackage{graphicx}
\usepackage[backref,colorlinks=true,linkcolor=blue,citecolor=blue,urlcolor=blue]{hyperref}

\newtheorem{theorem}{Theorem}[section]
\newtheorem{lemma}[theorem]{Lemma}
\newtheorem{corollary}[theorem]{Corollary}

\newtheorem{conjecture}[theorem]{Conjecture}
\theoremstyle{definition}
\newtheorem{definition}[theorem]{Definition}
\theoremstyle{remark}
\newtheorem{remark}[theorem]{Remark}

\numberwithin{equation}{section}
\numberwithin{figure}{section}

\DeclareMathOperator{\Mat}{Mat}

\newcommand{\Q}{\mathbb Q}

\newcommand{\bfe}{\mathbf e}

\newcommand{\T}{\mathsf T}

\title[Smith normal forms for coalescences]{Smith normal forms for coalescences at cospectral vertices}

\author[Y.-Z. Fan]{Yi-Zheng Fan*}
\address{Center for Pure Mathematics, School of Mathematical Sciences, Anhui University, Hefei 230601, P. R. China}
\email{fanyz@ahu.edu.cn}
\thanks{*Supported by National Natural Science Foundation of China (Grant No. 12331012).}

\author[K. Zhang]{Kuo Zhang}
\address{School of Mathematical Sciences, Anhui University, Hefei 230601, P. R. China}
\email{zhangk@stu.ahu.edu.cn}

\author[W. Wang]{Wei Wang$^\sharp$}
\address{School of Mathematics and Statistics, Xi'an Jiaotong University, Xi'an 710049, P. R. China}
\email{wang\_weiw@xjtu.edu.cn}
\thanks{$^\sharp$Corresponding author. Supported by National Natural Science Foundation of China (Grant No. 12371357).}

\subjclass[2020]{05C50, 15A21}
\keywords{Degree-similar graph; generalized adjacency matrix; Smith normal form; cospectral vertices; coalescence; real closed field}

\begin{document}

\begin{abstract}
Let $L_\mu(G)=A(G)-\mu D(G)$ be the generalized $\mu$-adjacency matrix of a finite graph $G$.
Fan, Xing, Zhang, and Wang constructed pairs of non-degree-similar trees for which the Smith normal forms of the matrices $tI-L_\mu(G)$ over $\Q(\mu)[t]$ coincide, and conjectured that their construction remains valid when the attached rooted path is replaced by an arbitrary rooted tree.
We prove this conjecture as a consequence of a more general coalescence theorem: if a finite graph $H$ has two vertices $u$ and $v$ that are cospectral for $L_\mu(H)$, then, for every finite rooted graph $R$ with root $r$, the matrices
\[
        tI-L_\mu(R(r)\odot H(u))
        \quad\text{and}\quad
        tI-L_\mu(R(r)\odot H(v))
\]
have the same Smith normal form over $\Q(\mu)[t]$, where $\odot$ denotes coalescence of rooted graphs.
The proof uses an orthogonal intertwiner over a real closed extension field.
\end{abstract}

\maketitle

\section{Introduction}
Let $G$ be a finite simple graph with adjacency matrix $A(G)$ and degree matrix $D(G)$.
Following Godsil and Sun \cite{GodsilSun}, two graphs $G_1$ and $G_2$ are called \emph{degree-similar} if there exists an invertible matrix $M$ such that
\[ M^{-1}A(G_1)M=A(G_2), \quad  M^{-1}D(G_1)M=D(G_2). \]
This condition implies similarity for several standard graph matrices, including the adjacency matrix, the Laplacian, the signless Laplacian and, when there are no isolated vertices, the normalized Laplacian.

The generalized $\mu$-adjacency matrix of $G$ is
\[  L_\mu(G):=A(G)-\mu D(G),\]
and the corresponding $\mu$-polynomial is
\[   \psi(G,t,\mu):=\det(tI-L_\mu(G)).\]
Wang et al. \cite{WangLiLuXu} asked whether two graphs with the same $\mu$-polynomial must be degree-similar via an orthogonal matrix.
Godsil and Sun \cite{GodsilSun} gave a negative answer by constructing pairs of graphs that have the same $\mu$-polynomial but are not degree-similar.

If $G_1$ and $G_2$ are degree-similar, then $L_\mu(G_1)$ and $L_\mu(G_2)$ are similar over $\Q(\mu)$.
Godsil and Sun proved that, over a field $F$, two matrices $B_1$ and $B_2$ are similar if and only if the matrices $tI-B_1$ and $tI-B_2$ have the same Smith normal form over $F[t]$ \cite[Theorem 9.1]{GodsilSun}.
Consequently, degree-similar graphs have the same Smith normal form for $tI-L_\mu(G)$ over $\Q(\mu)[t]$.
Godsil and Sun asked whether the converse holds \cite[Problem 10.2]{GodsilSun}.

Fan, Xing, Zhang and Wang gave a negative answer by constructing pairs of trees that are not degree-similar but whose matrices $tI-L_\mu(G)$ have the same Smith normal form over $\Q(\mu)[t]$ \cite{FanXingZhangWang}.
Their construction is based on McKay's rooted tree pair \cite{McKay}.
Let $\T$ be the tree shown in Figure~\ref{fig:H}, which was introduced by McKay in his construction of non-isomorphic cospectral trees.
In \cite[Lemma 11]{FanXingZhangWang}, the authors proved that if a rooted path is attached to $\T$ at vertex $4$ and at vertex $7$, then the resulting two trees have the same Smith normal form over $\Q(\mu)[t]$.
They then proposed the following conjecture.

\begin{conjecture}[{\cite{FanXingZhangWang}}]
\label{conj:FXZW}
Let $\T$ be the tree in Figure~\ref{fig:H}, and let $R$ be any nontrivial rooted tree with root $r$.
Define
\[ G_4:=R(r)\odot \T(4),\quad   G_7:=R(r)\odot \T(7),\]
where $\odot$ denotes coalescence of rooted graphs.
Then
\[  tI-L_\mu(G_4)  \quad\text{and}\quad  tI-L_\mu(G_7)\]
have the same Smith normal form over $\Q(\mu)[t]$.
\end{conjecture}

The purpose of this paper is to prove Conjecture~\ref{conj:FXZW}.
We prove a more general statement.
Let $H$ be any finite graph, and let $u,v\in V(H)$ be cospectral for $L_\mu(H)$.
Then attaching an arbitrary finite rooted graph $R$ at $u$ or at $v$ gives two coalescences whose generalized $\mu$-adjacency matrices have the same Smith normal form over $\Q(\mu)[t]$.
Conjecture~\ref{conj:FXZW} follows by taking $H=\T$ and $(u,v)=(4,7)$.
The main linear-algebraic ingredient is an orthogonal involution, over a real closed extension field of $\Q(\mu)$, that commutes with $L_\mu(H)$ and sends $\bfe_u$ to $\bfe_v$.

\vspace{1mm}

\begin{figure}[htbp]
  \centering
  \includegraphics[scale=0.8]{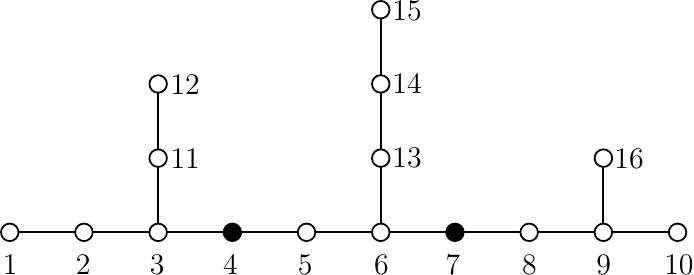}
  \caption{\small The tree $\T$ on $16$ vertices; the distinguished vertices are $4$ and $7$.}
  \label{fig:H}
\end{figure}

\section{Main result}

For a matrix $M$ with rows and columns indexed by a finite set and for an index $u$, let $M(u)$ denote the principal submatrix obtained from $M$ by deleting the row and column indexed by $u$.
For a graph $G$ and a vertex $u$, let $G(u)$ denote a rooted copy of $G$ with root $u$.
If $R$ is a rooted graph with root $r$, then the \emph{coalescence} $R(r)\odot G(u)$ denotes the graph obtained by identifying $r$ with the vertex $u$ of $G$.

\begin{definition}
Let $M$ be a square matrix over a field.
Two indices $u$ and $v$ are called \emph{cospectral for $M$} if
\[   \det(tI-M(u))=\det(tI-M(v)).\]
\end{definition}

The idea of cospectral vertices goes back to Schwenk \cite{Schwenk}, who used it to construct non-isomorphic cospectral trees.
The characterization of cospectral vertices for adjacency matrix of a graph was established in \cite{GodsilMcKay,GodsilSmith} over the real field.

We shall use real closed fields only as a technical device.
Recall that a field \(F\) is \emph{real closed} if it is orderable, every positive element of \(F\) is a square, and every polynomial over \(F\) of odd degree has a root in \(F\);
equivalently, \(F(\sqrt{-1})\) is algebraically closed; see, for example,
\cite[Chapter XIII]{LangAlgebra}.

We fix an ordering of \(K=\mathbb Q(\mu)\), for instance the ordering obtained
by identifying \(K\) with \(\mathbb Q(t)\) and declaring \(t\) to be a positive
infinite transcendental element.
Let \(F\) be a real closure of \(K\) with respect to this ordering.
The particular choice of ordering plays no role in what follows.
We only use the fact that symmetric matrices over the real closed field
\(F\) admit an orthogonal spectral decomposition.
After the required similarity is obtained over \(F\), it is descended back to
\(K=\mathbb Q(\mu)\) by using rational canonical form.

For a graph \(G\) and a vertex \(v\in V(G)\), or more generally for a square
matrix \(M\) and an index \(v\), let \(\bfe_v\) denote the standard basis
column vector indexed by \(V(G)\), or by the index set of \(M\), respectively.
Thus \(\bfe_v\) has its only nonzero entry, equal to \(1\), in position \(v\).

\begin{lemma}[Orthogonal intertwiner]\label{lem:cospectral-intertwiner}
Let $F$ be a real closed field, and let $M\in\Mat_n(F)$ be a symmetric matrix.
Let $u$ and $v$ be two indices.
Suppose that $u$ and $v$ are cospectral for $M$.
Then there exists an orthogonal involution $Q\in\Mat_n(F)$ such that
\[   QM=MQ, \quad Q\bfe_u=\bfe_v, \quad Q\bfe_v=\bfe_u.\]
\end{lemma}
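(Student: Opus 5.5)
The plan is to use the orthogonal spectral decomposition of $M$ over the real closed field $F$, which the paper has already said we may assume. We show that cospectrality forces $E_\theta\bfe_u$ and $E_\theta\bfe_v$ to have the same length in every eigenspace. Then we build $Q$ as a product of Householder reflections, one in each eigenspace. If $u=v$, take $Q=I$, so assume $u\neq v$.

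\textbf{Step 1: spectral decomposition.} Since $M$ is symmetric over the real closed field $F$, all its eigenvalues lie in $F$ and we can write
\[ M=\sum_{\theta}\theta E_\theta . \]
Here $\theta$ runs over the distinct eigenvalues and the $E_\theta$ are the orthogonal projections onto the eigenspaces $W_\theta$. These projections are symmetric, idempotent, mutually orthogonal, and satisfy $\sum_\theta E_\theta=I$.

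\textbf{Step 2: translate cospectrality.} Let $\varphi(t)=\det(tI-M)$. By the adjugate formula,
\[ \det(tI-M(u))=\bigl((tI-M)^{-1}\bigr)_{uu}\varphi(t)=\varphi(t)\sum_\theta \frac{(E_\theta)_{uu}}{t-\theta}, \]
and the same holds for $v$. Cospectrality therefore gives $\sum_\theta \bigl((E_\theta)_{uu}-(E_\theta)_{vv}\bigr)/(t-\theta)=0$ in $F(t)$. The rational functions $1/(t-\theta)$ for distinct $\theta$ are linearly independent; to see this, multiply by $\prod_\theta(t-\theta)$ and evaluate at each $\theta$. Hence $(E_\theta)_{uu}=(E_\theta)_{vv}$ for every $\theta$. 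Put $x_\theta=E_\theta\bfe_u$ and $y_\theta=E_\theta\bfe_v$. Because $E_\theta$ is symmetric and idempotent, $x_\theta^{\mathsf T}x_\theta=(E_\theta)_{uu}$, so we get $x_\theta^{\mathsf T}x_\theta=y_\theta^{\mathsf T}y_\theta$.

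\textbf{Step 3: construct $Q$.} Let $w_\theta=x_\theta-y_\theta\in W_\theta$, and let $S$ be the set of eigenvalues with $w_\theta\neq0$. Since $F$ is ordered, $w_\theta^{\mathsf T}w_\theta>0$ for $\theta\in S$. Define
\[ Q=I-\sum_{\theta\in S}\frac{2\,w_\theta w_\theta^{\mathsf T}}{w_\theta^{\mathsf T}w_\theta}. \]
The vectors $w_\theta$ lie in mutually orthogonal eigenspaces, so the rank-one projections in the sum are mutually orthogonal. It follows that $Q$ is symmetric with $Q^2=I$, i.e. an orthogonal involution. Each $w_\theta w_\theta^{\mathsf T}$ commutes with $M$, since $Mw_\theta=\theta w_\theta$ and $w_\theta^{\mathsf T}M=\theta w_\theta^{\mathsf T}$; hence $QM=MQ$.

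\textbf{Step 4: check $Q\bfe_u=\bfe_v$.} For each $\theta$, the equal lengths give $w_\theta^{\mathsf T}x_\theta=\tfrac12 w_\theta^{\mathsf T}w_\theta$. So the reflection in $w_\theta$ sends $x_\theta$ to $x_\theta-w_\theta=y_\theta$ when $\theta\in S$, and $x_\theta=y_\theta$ when $\theta\notin S$. Vectors in other eigenspaces are orthogonal to $w_\theta$, so the other terms of the sum do not affect $x_\theta$, and in every case $Qx_\theta=y_\theta$. Summing over $\theta$ gives $Q\bfe_u=\sum_\theta Qx_\theta=\sum_\theta y_\theta=\bfe_v$. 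Since $Q$ is an involution, $Q\bfe_v=\bfe_u$.

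The only substantive step is Step 2. Over an arbitrary field it would fail, because there may be no orthogonal eigenspace decomposition and nonzero vectors may have zero norm. The real closed hypothesis is exactly what guarantees both the decomposition and $w_\theta^{\mathsf T}w_\theta\neq0$.
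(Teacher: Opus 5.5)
Your proof is correct and follows essentially the same route as the paper: the spectral decomposition over $F$, the adjugate identity together with uniqueness of partial fractions to get $\|E_\theta\bfe_u\|=\|E_\theta\bfe_v\|$, and then a reflection in each eigenspace. Your explicit formula $Q=I-\sum_{\theta\in S}2w_\theta w_\theta^{\top}/(w_\theta^{\top}w_\theta)$ is exactly the paper's orthogonal direct sum of the eigenspace reflections $Q_\theta$ (taken to be the identity where $x_\theta=y_\theta$), written as a single matrix.
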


\begin{proof}
Since $F$ is real closed and $M$ is symmetric, the spectral theorem over real closed fields gives an orthogonal spectral decomposition
\[        M=\sum_{\theta\in\Theta}\theta E_\theta,\]
where $\Theta$ is the set of distinct eigenvalues of $M$, and $E_\theta$ is the orthogonal projection onto the $\theta$-eigenspace.

By Cramer's rule,
\[ \bfe_u^\top(tI-M)^{-1}\bfe_u   =  \frac{\det(tI-M(u))}{\det(tI-M)},\]
and similarly
\[ \bfe_v^\top(tI-M)^{-1}\bfe_v   =  \frac{\det(tI-M(v))}{\det(tI-M)}.\]
The cospectrality assumption gives
\[ \bfe_u^\top(tI-M)^{-1}\bfe_u   =  \bfe_v^\top(tI-M)^{-1}\bfe_v.\]
On the other hand,
\[ (tI-M)^{-1}  =   \sum_{\theta\in\Theta}\frac{1}{t-\theta}E_\theta.\]
Therefore
\[ \sum_{\theta\in\Theta}\frac{\bfe_u^\top E_\theta\bfe_u}{t-\theta}
        = \sum_{\theta\in\Theta} \frac{\bfe_v^\top E_\theta\bfe_v}{t-\theta}.\]
By uniqueness of partial fractions,
\[  \bfe_u^\top E_\theta\bfe_u    =    \bfe_v^\top E_\theta\bfe_v \]
for every $\theta\in\Theta$.
Equivalently,
\[    \|E_\theta\bfe_u\|^2=\|E_\theta\bfe_v\|^2.\]

Fix $\theta\in\Theta$ and put
\[   x_\theta:=E_\theta\bfe_u,   \quad     y_\theta:=E_\theta\bfe_v.\]
Then $x_\theta$ and $y_\theta$ lie in the $\theta$-eigenspace and have the same norm.
We claim that there is an orthogonal involution $Q_\theta$ of the $\theta$-eigenspace such that $Q_\theta x_\theta=y_\theta$.
If $x_\theta=y_\theta$, take $Q_\theta$ to be the identity.
Otherwise, define $Q_\theta$ to be the reflection in the hyperplane perpendicular to $x_\theta-y_\theta$:
\[   Q_\theta z
=   z -2\frac{\langle z,x_\theta-y_\theta\rangle}{\langle x_\theta-y_\theta,x_\theta-y_\theta\rangle}
        (x_\theta-y_\theta).
\]
The denominator is nonzero, since \(x_\theta-y_\theta\neq 0\) and the standard
quadratic form is anisotropic over a real closed field.
Since $\|x_\theta\|=\|y_\theta\|$, this reflection sends $x_\theta$ to $y_\theta$.
It is orthogonal and satisfies $Q_\theta^2=I$.

Taking the orthogonal direct sum of the maps $Q_\theta$ over all eigenspaces, we obtain an orthogonal involution $Q$ on $F^n$.
Since $Q$ preserves each eigenspace of $M$, it commutes with $M$.
Moreover,
\[ Q\bfe_u  = Q\left(\sum_{\theta\in\Theta}E_\theta\bfe_u\right)
            = \sum_{\theta\in\Theta}Q_\theta E_\theta\bfe_u
            = \sum_{\theta\in\Theta}E_\theta\bfe_v
            = \bfe_v.\]
Since $Q^2=I$, it follows that $Q\bfe_v=\bfe_u$.
This proves the lemma.
\end{proof}

\begin{remark}
The preceding lemma is related to the notion of strongly cospectral vertices,
which was systematically studied by Godsil and Smith \cite{GodsilSmith} in
connection with continuous quantum walks.
Recall that two indices $u$ and $v$ are \emph{strongly cospectral} for a real symmetric matrix $M$ with spectral decomposition $M=\sum_\theta \theta E_\theta$
if
\[        E_\theta\bfe_u=\pm E_\theta\bfe_v\]
for every eigenvalue $\theta$.
In this case one may choose
\[  Q=\sum_\theta \varepsilon_\theta E_\theta,  \quad   \varepsilon_\theta\in\{\pm1\},\]
which is an orthogonal involution satisfying
\[   QM=MQ,   \quad  Q\bfe_u=\bfe_v.\]
Thus strongly cospectral vertices give a special case in which the intertwiner
has the particularly simple form above, whereas Lemma~\ref{lem:cospectral-intertwiner}
requires only ordinary cospectrality.
\end{remark}

We use the following standard criterion relating similarity and Smith normal forms; see \cite[Theorem 9.1]{GodsilSun}, \cite[Theorem 7.6.1]{LanTis}, or \cite[Theorem 2.1.4]{Fri}.

\begin{lemma}\label{lem:Sim-SNF}
Two matrices $B_1$ and $B_2$ over a field $F$ are similar if and only if the matrices $tI-B_1$ and $tI-B_2$ have the same Smith normal form over $F[t]$.
\end{lemma}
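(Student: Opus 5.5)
The statement is the classical criterion, and I would prove it through the structure theory of finitely generated modules over the PID $F[t]$. Throughout, $B$ is an $n\times n$ matrix over $F$. Let $V_B$ denote the $F[t]$-module $F^n$ on which $t$ acts as $B$.

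\textbf{Step 1: the forward direction.} Suppose $B_2=P^{-1}B_1P$ with $P\in\Mat_n(F)$ invertible. Then
\[ tI-B_2=P^{-1}(tI-B_1)P. \]
A constant invertible matrix has nonzero constant determinant, so it is unimodular over $F[t]$. Hence $tI-B_1$ and $tI-B_2$ are equivalent over $F[t]$. Since the Smith normal form is an invariant of equivalence (its diagonal entries are determined by the gcd's of the $k\times k$ minors), the two forms coincide.

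\textbf{Step 2: a presentation of $V_B$.} I will show that the sequence
\[ F[t]^n\xrightarrow{\;tI-B\;}F[t]^n\xrightarrow{\;\pi\;}V_B\to 0 \]
is exact. Here $\pi(\sum_i f_i(t)\bfe_i)=\sum_i f_i(B)\bfe_i$.
\begin{itemize}
\item $\pi$ is surjective, since already the constant vectors map onto $F^n$.
\item $\pi\circ(tI-B)=0$, because $\pi(t\bfe_i)=B\bfe_i=\pi(B\bfe_i)$ for each $i$.
\item For exactness in the middle, I would use a dimension count rather than an explicit kernel computation. Write $tI-B=U\,\mathrm{diag}(d_1,\dots,d_n)\,V$ with $U,V$ unimodular. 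Then $F[t]^n/\operatorname{im}(tI-B)\cong\bigoplus_i F[t]/(d_i)$, whose $F$-dimension is $\sum_i\deg d_i=\deg\det(tI-B)=n$. So $\pi$ induces a surjection between $n$-dimensional $F$-spaces, which must be an isomorphism.
\end{itemize}
Alternatively, one can show directly that every element of $F[t]^n$ is congruent to a constant vector modulo $\operatorname{im}(tI-B)$, by repeatedly replacing $t\bfe_i$ with $B\bfe_i$.

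\textbf{Step 3: the converse.} Suppose $tI-B_1$ and $tI-B_2$ have the same Smith normal form. Then $tI-B_2=U(tI-B_1)V$ for some unimodular $U,V$. Equivalent presentation matrices have isomorphic cokernels: multiplication by $U$ induces an isomorphism $F[t]^n/\operatorname{im}(tI-B_1)\cong F[t]^n/\operatorname{im}(tI-B_2)$. By Step 2, this gives an $F[t]$-module isomorphism $\phi\colon V_{B_1}\to V_{B_2}$. As a map it is an invertible $F$-linear map $P$, and $t$-linearity says exactly that $PB_1=B_2P$. Hence $B_2=PB_1P^{-1}$, so $B_1$ and $B_2$ are similar.

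The main obstacle is the exactness in Step 2, that is, identifying $V_B$ with the cokernel of $tI-B$. The degree count via $\det(tI-B)$ is the cleanest way I see to settle it. Everything else is formal bookkeeping with unimodular matrices.
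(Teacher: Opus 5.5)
Your proof is correct and complete. It takes a different route only in the sense that the paper gives no proof of this lemma at all: it is quoted as a standard fact, with references to Godsil--Sun, Lancaster--Tismenetsky and Friedland.

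The classical matrix-theoretic proof in such textbooks is different from yours. It uses division of matrix polynomials to show that an equivalence $tI-B_2=U(t)(tI-B_1)V(t)$ of these regular pencils can be replaced by a strict one, $tI-B_2=P_0(tI-B_1)Q_0$ with $P_0,Q_0\in\Mat_n(F)$. Comparing coefficients of $t$ then gives $P_0Q_0=I$ and $P_0B_1Q_0=B_2$.

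Your route instead identifies $V_B$ with $\operatorname{coker}(tI-B)$ and reads similarity as isomorphism of $F[t]$-modules. The exactness step is handled correctly by the count $\sum\deg d_i=\deg\det(tI-B)=n$. The cost is reliance on the existence of the Smith form over the PID $F[t]$, which the statement presupposes anyway.

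What each approach buys:
\begin{itemize}
\item \emph{Your module-theoretic route} is more conceptual. It shows directly that the nonunit Smith entries of $tI-B$ are the invariant factors of $B$. That is exactly the fact the paper invokes in the proof of Theorem~\ref{thm:main}, when it descends similarity from the real closure $F$ to $K=\Q(\mu)$ via rational canonical form. The Smith form does not change under field extension, because gcds of minors are computed by the same Euclidean algorithm.
\item \emph{The division argument} is more elementary and avoids module language. It also produces the conjugating matrix explicitly from the unimodular factors.
\end{itemize}
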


We now state and prove the general coalescence result.
No acyclicity assumption is needed on either the core graph or the attached rooted graph.

\begin{theorem}\label{thm:main}
Let $H$ be a finite simple graph, and let $u,v\in V(H)$ be distinct vertices that are cospectral for $L_\mu(H)$.
Let $R$ be any finite rooted graph with root $r$.
Define
\[   G_u:=R(r)\odot H(u),   \quad    G_v:=R(r)\odot H(v).\]
Then
\[   tI-L_\mu(G_u)   \quad\text{and}\quad  tI-L_\mu(G_v)\]
have the same Smith normal form over $\Q(\mu)[t]$.
\end{theorem}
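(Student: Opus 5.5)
By Lemma~\ref{lem:Sim-SNF}, it suffices to show that $L_\mu(G_u)$ and $L_\mu(G_v)$ are similar over $K=\Q(\mu)$. The plan is to first produce a similarity over the real closure $F$ of $K$, and then descend it to $K$.

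\textbf{Block decomposition.} Write $V(G_u)=V(H)\sqcup V'$, where $V'=V(R)\setminus\{r\}$, and index $G_v$ by the same set. Let $d_r$ be the degree of $r$ in $R$. Then
\[
L_\mu(G_u)=\begin{pmatrix} L_\mu(H)-\mu d_r\,\bfe_u\bfe_u^\top & \bfe_u b^\top\\ b\,\bfe_u^\top & N\end{pmatrix},
\]
where $N$ is the submatrix of $L_\mu(R)$ on $V'$ and $b\in\{0,1\}^{V'}$ is the indicator vector of the neighbours of $r$ in $R$. The matrix $L_\mu(G_v)$ has the same form with $\bfe_v$ in place of $\bfe_u$. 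The key point is that the degree correction enters only through $\bfe_u\bfe_u^\top$ at the glued vertex, and the degrees of all vertices in $V'$ do not depend on where $R$ is attached.

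\textbf{Similarity over $F$.} The matrix $L_\mu(H)$ is symmetric over $K\subseteq F$, and $u,v$ are cospectral for it. Lemma~\ref{lem:cospectral-intertwiner} therefore gives an orthogonal involution $Q\in\Mat(F)$ with
\[
QL_\mu(H)=L_\mu(H)Q,\qquad Q\bfe_u=\bfe_v,\qquad Q\bfe_v=\bfe_u .
\]
Set $P=Q\oplus I_{V'}$. Because $Q$ is orthogonal, $Q\bfe_u=\bfe_v$ gives $\bfe_u^\top Q^\top=\bfe_v^\top$, and hence
\[
Q(\bfe_u\bfe_u^\top)Q^\top=\bfe_v\bfe_v^\top,\qquad Q\bfe_u b^\top=\bfe_v b^\top,\qquad b\,\bfe_u^\top Q^\top=b\,\bfe_v^\top .
\]
Together with $QL_\mu(H)Q^\top=L_\mu(H)$, a block computation gives
\[
P\,L_\mu(G_u)\,P^\top=L_\mu(G_v),
\]
and $P^\top=P^{-1}$. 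So the two matrices are similar over $F$.

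\textbf{Descent to $K$.} Both matrices have entries in $K$. Their rational canonical forms over $K$ remain rational canonical forms over $F$, since the invariant factors computed over $K$ are monic polynomials in $K[t]$ satisfying the divisibility chain, and those conditions are unchanged by extending the field. By uniqueness of the rational canonical form over $F$, the two forms coincide, so the matrices are similar over $K$.

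Equivalently, one can stay at the level of Smith normal forms: the Smith normal form of $tI-B$ over $K[t]$ is determined by the gcds of the $k\times k$ minors, and these gcds do not change under the field extension $K\subseteq F$. Either way, Lemma~\ref{lem:Sim-SNF} then gives the theorem.

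\textbf{Main obstacle.} The substantive input is the existence of $Q$, which is already supplied by Lemma~\ref{lem:cospectral-intertwiner}. What remains to check with care is that the $-\mu d_r$ degree correction at the glued vertex is carried correctly by $Q$. This holds only because $Q$ is orthogonal, which is what converts $\bfe_u\bfe_u^\top$ into $\bfe_v\bfe_v^\top$. I expect the descent step to be routine.
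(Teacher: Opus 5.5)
Your proposal is correct and follows the paper's proof essentially step for step. It uses the same block form of $L_\mu(G_u)$ and $L_\mu(G_v)$, with the degree correction $-\mu d_r\bfe_u\bfe_u^\top$ at the glued vertex; the same conjugation by $Q\oplus I$ with $Q$ from Lemma~\ref{lem:cospectral-intertwiner}; and the same descent from $F$ to $\Q(\mu)$ via uniqueness of the rational canonical form.

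Your alternative descent through determinantal divisors (gcds of minors do not change under field extension) is also valid, and it goes straight to the Smith normal form.
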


\begin{proof}
Let
\[     K:=\Q(\mu),    \quad     B:=L_\mu(H)=A(H)-\mu D(H).\]
By assumption,
\begin{equation}\label{eq:general-root-deleted-equality}
        \det(tI-B(u))=\det(tI-B(v)).
\end{equation}
Let \(F\) be the real closure of \(K\) fixed above.
The equality \eqref{eq:general-root-deleted-equality} remains valid over $F[t]$.
Since $B$ is symmetric, Lemma~\ref{lem:cospectral-intertwiner} yields an orthogonal involution $Q\in\Mat_{V(H)}(F)$ such that
\begin{equation}\label{eq:general-Q-properties}
        QB=BQ,    \quad    Q\bfe_u=\bfe_v,   \quad   Q\bfe_v=\bfe_u.
\end{equation}

Put $R^\circ:=R-r$.
Let $k=\deg_R(r)$, and let $c$ be the column vector indexed by $V(R^\circ)$ defined by
\[        c_v=
        \begin{cases}
        1, & v \text{ is adjacent to } r \text{ in } R,\\
        0, & \text{otherwise}.
        \end{cases}
\]
Let
\[    C:=A(R^\circ)-\mu D_R[V(R^\circ)], \]
where $D_R[V(R^\circ)]$ denotes the principal submatrix of the degree matrix of $R$ indexed by $V(R^\circ)$.
Thus the degrees in this diagonal block are the degrees in $R$, not the degrees in $R^\circ$.

Order the vertices of $G_u$ and $G_v$ so that the vertices of $H$ come first, followed by the vertices of $R^\circ$.
Then
\begin{equation}\label{eq:block-Gu}
        L_\mu(G_u)
        = \begin{pmatrix}
        B-\mu k\bfe_u\bfe_u^\top & \bfe_uc^\top\\
        c\bfe_u^\top              & C
        \end{pmatrix},
\end{equation}
and
\begin{equation}\label{eq:block-Gv}
        L_\mu(G_v)
        = \begin{pmatrix}
        B-\mu k\bfe_v\bfe_v^\top & \bfe_vc^\top\\
        c\bfe_v^\top              & C
        \end{pmatrix}.
\end{equation}
Define
\[   \widehat Q:=
        \begin{pmatrix}
        Q&0\\
        0&I
        \end{pmatrix}.
\]
Since $Q$ is an orthogonal involution, $Q^{-1}=Q^\top=Q$.
Using \eqref{eq:general-Q-properties}, \eqref{eq:block-Gu}, and \eqref{eq:block-Gv}, we get
\[  \widehat Q^{-1}L_\mu(G_u)\widehat Q=L_\mu(G_v).\]
Thus $L_\mu(G_u)$ and $L_\mu(G_v)$ are similar over $F$.

Since both matrices have entries in $K$, similarity over the extension field $F$ implies similarity over $K$ by rational canonical form.
Indeed, two matrices over $K$ that become similar over an extension field have the same invariant factors over $K[t]$, hence the same rational canonical form over $K$.
Therefore $L_\mu(G_u)$ and $L_\mu(G_v)$ are similar over $K$.
By Lemma~\ref{lem:Sim-SNF},
\[      tI-L_\mu(G_u)
        \quad\text{and}\quad
        tI-L_\mu(G_v)
\]
have the same Smith normal form over $K[t]=\Q(\mu)[t]$.
\end{proof}

\begin{corollary}[Conjecture~\ref{conj:FXZW}]\label{cor:conj15}
Let $\T$ be the tree in Figure~\ref{fig:H}, with vertices labelled as in that figure.
Let $R$ be any finite rooted graph with root $r$.
Define
\[
        G_4:=R(r)\odot \T(4),
        \quad
        G_7:=R(r)\odot \T(7).
\]
Then
\[
        tI-L_\mu(G_4)
        \quad\text{and}\quad
        tI-L_\mu(G_7)
\]
have the same Smith normal form over $\Q(\mu)[t]$.
In particular, Conjecture~\ref{conj:FXZW} holds when $R$ is any nontrivial rooted tree.
\end{corollary}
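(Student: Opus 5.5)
The corollary reduces to Theorem~\ref{thm:main} applied with $H=\T$ and $(u,v)=(4,7)$. The only hypothesis that still has to be checked is that vertices $4$ and $7$ of $\T$ are cospectral for $L_\mu(\T)$, i.e.
\[
\det\bigl(tI-L_\mu(\T)(4)\bigr)=\det\bigl(tI-L_\mu(\T)(7)\bigr)
\quad\text{in } \Q(\mu)[t].
\]
Once this is known, Theorem~\ref{thm:main} gives the equality of Smith normal forms for every finite rooted graph $R$. The final clause follows at once, because a nontrivial rooted tree is in particular a finite rooted graph.

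To verify the cospectrality I would first look for it in \cite{FanXingZhangWang}. Their Lemma~11 treats attachment of a rooted path at $4$ and at $7$, and its proof should already contain this identity or something equivalent to it. If it is not stated there, I would derive it from that lemma itself, using the path $P_2$ rooted at an endpoint. In the block form \eqref{eq:block-Gu} with $k=1$ and $C=(-\mu)$, expanding $\det(tI-L_\mu(G_u))$ along the row of the pendant vertex gives
\[
\det(tI-L_\mu(G_u)) = (t+\mu)\det\bigl(tI-B+\mu\bfe_u\bfe_u^\top\bigr) - \det\bigl(tI-B(u)\bigr).
\]
By the rank-one (matrix determinant lemma) expansion, the first determinant equals $\det(tI-B)+\mu\det(tI-B(u))$. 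Hence
\[
\det(tI-L_\mu(G_u))=(t+\mu)\det(tI-B)+\bigl(\mu t+\mu^2-1\bigr)\det\bigl(tI-B(u)\bigr).
\]
The same formula holds with $v$ in place of $u$. Equal Smith normal forms force equal characteristic polynomials, which are the products of the invariant factors. The coefficient $\mu t+\mu^2-1$ is nonzero, so subtracting the two formulas yields $\det(tI-B(4))=\det(tI-B(7))$.

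If neither route is available, the fallback is a direct computation. $\T(4)$ and $\T(7)$ are forests on $15$ vertices, and their $\mu$-polynomials can be computed by the standard pendant-vertex recursion for trees, adapted to the diagonal $-\mu\deg$ entries (degrees taken in $\T$).

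The main obstacle is this verification step. It depends on the explicit labelling in Figure~\ref{fig:H}, and if done by hand it is a lengthy but routine polynomial computation. Everything else is an immediate application of Theorem~\ref{thm:main}.
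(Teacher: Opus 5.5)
Your proposal is correct and follows the paper's proof: apply Theorem~\ref{thm:main} with $H=\T$ and $(u,v)=(4,7)$, where the only input is that vertices $4$ and $7$ are cospectral for $L_\mu(\T)$. The paper simply cites this cospectrality from the computation in \cite[Lemma 3.1]{GodsilSun}; your alternative of extracting it from the path case of \cite[Lemma 11]{FanXingZhangWang}, via the identity $\det(tI-L_\mu(G_u))=(t+\mu)\det(tI-L_\mu(\T))+(\mu t+\mu^2-1)\det(tI-L_\mu(\T)(u))$, is also valid.
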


\begin{proof}
Let $B:=L_\mu(\T)$.
By the computation in the proof of \cite[Lemma 3.1]{GodsilSun}, the two distinguished vertices $4$ and $7$ of $\T$ are cospectral for $B$, that is,
\[    \det(tI-B(4))=\det(tI-B(7)). \]
Therefore the assertion follows from Theorem~\ref{thm:main}, applied with $H=\T$, $u=4$, and $v=7$.
\end{proof}

\begin{corollary}
Let $R$ be any nontrivial rooted tree with root $r$, and let
\[
        G_4:=R(r)\odot \T(4),
        \quad
        G_7:=R(r)\odot \T(7).
\]
Then $G_4$ and $G_7$ have the same Smith normal form for $tI-L_\mu$ over $\Q(\mu)[t]$, but they are not degree-similar.
\end{corollary}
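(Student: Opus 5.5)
The first assertion, equality of Smith normal forms, is exactly Corollary~\ref{cor:conj15} specialized to a nontrivial rooted tree $R$. So the real content is to show that $G_4$ and $G_7$ are \emph{not} degree-similar. My plan is to find an invariant of degree-similarity that the two trees fail to share.

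If $M^{-1}A(G_4)M=A(G_7)$ and $M^{-1}D(G_4)M=D(G_7)$, then $M$ conjugates every word in $A$ and $D$. Hence
\[
\operatorname{tr}\, w(A(G_4),D(G_4))=\operatorname{tr}\, w(A(G_7),D(G_7))
\]
for every noncommutative monomial $w$. The cleanest invariants to use are traces such as $\operatorname{tr}(D^aAD^bA)=2\sum_{xy\in E}d_x^a d_y^b$ when $a\ne b$ (symmetrized appropriately), or more generally $\operatorname{tr}(D^{a_1}A D^{a_2}A\cdots)$, which count weighted closed walks. Equivalently, degree-similarity implies that $A+sD$ and the corresponding matrix for the other tree are similar for every indeterminate $s$. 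So a second option is to compare $\det(tI-A-sD)$, or better, the similarity class of the pair, via such mixed traces.

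Concretely, I would compute the degree-pair multiset of edges, namely $\sum_{xy\in E}\bigl(d_x^a d_y^b+d_x^b d_y^a\bigr)$. Coalescing at $4$ versus $7$ changes the degree of the root from $d_{\T}(4)+k$ to $d_{\T}(7)+k$, and also changes the degrees seen by the neighbours of the root. If $d_{\T}(4)\ne d_{\T}(7)$, the multiset of degrees of $G_4$ and $G_7$ already differs. However, $D(G_4)$ and $D(G_7)$ must be similar, which forces equal degree multisets, so I expect the degrees of $4$ and $7$ to agree in McKay's tree. The distinction must therefore come from a finer invariant, such as the multiset $\{\{d_x,d_y\}:xy\in E\}$ obtained from $\operatorname{tr}(D^aAD^bA)$ for all $a,b$. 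Since the neighbours of $r$ in $R$ receive the partner degree $d_{\T}(4)+k$ or $d_{\T}(7)+k$ either way, the difference comes from the $\T$-side neighbours of $4$ versus those of $7$.

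The main obstacle is that I do not have Figure~\ref{fig:H} in front of me, and it is possible that these low-order invariants coincide. In that case I would climb to $\operatorname{tr}(D^{a}AD^{b}AD^{c}A D^{e}A)$, which captures degree-labelled closed walks of length $4$ (paths of length $2$ traversed back). Alternatively, I would use a cleaner argument: if the two trees were degree-similar, then for every real $s$ the matrices $A+sD$ would be cospectral. Taking $s$ large isolates vertices by degree, and the leading-order behavior reduces to comparing $A$ restricted to degree classes. That reduction should detect the asymmetric position of $4$ versus $7$ in $\T$.

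Since $R$ is arbitrary, I must make sure the invariant's difference is independent of $R$. The contributions involving only $R^\circ$ cancel between $G_4$ and $G_7$. Only terms involving edges at the root and within $\T$ differ, and these depend on $R$ only through $k$ and the degrees of the root's $R$-neighbours. I would arrange the difference as a polynomial in these quantities and show it is nonzero, for example by evaluating a suitable trace difference whose sign is fixed for all $k\ge1$.
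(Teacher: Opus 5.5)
Your reduction of the Smith normal form assertion to Corollary~\ref{cor:conj15} is fine. The non-degree-similarity half, however, has a genuine gap: you never produce an invariant that separates $G_4$ from $G_7$, and the ones you name provably cannot.

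\begin{itemize}
\item \emph{The route through $A+sD$ is ruled out by Theorem~\ref{thm:main}.} This covers both comparing $\det(tI-A-sD)$ and the large-$s$ perturbation argument. The theorem gives that $L_\mu(G_4)$ and $L_\mu(G_7)$ are similar over $\Q(\mu)$. Hence the matrices $A+sD$ of the two trees are cospectral, and, being symmetric, similar, for every real $s$. Similarity of this one-parameter family is strictly weaker than degree-similarity, and it is exactly what $G_4$ and $G_7$ satisfy.

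\item \emph{Your concrete candidate $\operatorname{tr}(D^aAD^bA)$ also fails.} Cospectrality of $4$ and $7$ gives $\bfe_4^\top L_\mu(\T)^j\bfe_4=\bfe_7^\top L_\mu(\T)^j\bfe_7$ for all $j$. The coefficient of $\mu^j$ yields $d_4=d_7$. The coefficient of $\mu^{j-2}$ collects the words $D^aAD^bAD^c$, each contributing $d_u^{a+c}\sum_{y\sim u}d_y^b$. By induction on $j$ this yields $\sum_{y\sim 4}d_y^b=\sum_{y\sim 7}d_y^b$ for all $b$. So $4$ and $7$ have the same degree and the same multiset of neighbour degrees in $\T$. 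Consequently the multisets of edge degree-pairs of $G_4$ and $G_7$ coincide for every $R$.

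\item \emph{Longer words are not handled.} Beyond this you only promise to climb to longer words. For those, closed walks reach deeper into $R$, so your claim that the difference depends on $R$ only through $k$ and the degrees of the root's neighbours no longer holds. Nothing in the proposal shows that one word works for all $R$.
\end{itemize}

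The missing idea is to stop searching for a trace invariant and reduce to isomorphism. McKay's construction guarantees that $R(r)\odot\T(4)$ and $R(r)\odot\T(7)$ are non-isomorphic for every nontrivial rooted tree $R$. Godsil and Sun proved that two trees are degree-similar if and only if they are isomorphic \cite[Theorem 3.2]{GodsilSun}. Together these give non-degree-similarity at once, and this is the paper's argument.

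Your word-trace strategy is not wrong in principle. But it requires an explicit, $R$-uniform computation on the actual tree $\T$, which the proposal does not supply. The easy candidates are forced to coincide by the very cospectrality that drives the first half of the statement.
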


\begin{proof}
The equality of Smith normal forms follows from Corollary~\ref{cor:conj15}.
By McKay's theorem, as used in \cite{FanXingZhangWang}, the two trees $G_4$ and $G_7$ are non-isomorphic whenever $R$ is a nontrivial rooted tree.
Also, two trees are degree-similar if and only if they are isomorphic \cite[Theorem 3.2]{GodsilSun}.
Therefore $G_4$ and $G_7$ are not degree-similar.
\end{proof}

\begin{remark}
Lemma 11 of \cite{FanXingZhangWang} proves the path case by showing that the $(n-1)$-st determinant divisor equals $1$, and hence the Smith normal form has the special form
\[   1,\ldots,1,\psi(t,\mu).\]
For a general rooted graph $R$, and a fortiori in the general setting of Theorem~\ref{thm:main}, this special form need not hold.
The argument above proves the stronger and more natural conclusion needed for Conjecture~\ref{conj:FXZW}: the two complete Smith normal forms are equal.
\end{remark}

\end{document}